\newcommand{\Ad}{\operatorname{Ad}}
\newcommand{\Tr}{\operatorname{Tr}}
 \newcommand{\supp}{\operatorname{supp}}
\newcommand{\Is}{\operatorname{Is}}
\newcommand{\Sp}{\operatorname{Sp}}
\newcommand{\Var}{\operatorname{Var}}
   \theoremstyle{plain}
   \newtheorem{thm}{Theorem}[section]
   \newtheorem{lemma}[thm]{Lemma}  
   \newtheorem{cor}[thm]{Corollary}
   \theoremstyle{definition}
   \theoremstyle{remark}
   \newtheorem{obs}[thm]{Observation}
   \newtheorem{remark}[thm]{Remark}
\newtheorem{assert}[thm]{Assertion}
\definecolor{mybgcolor}{gray}{0.8}
\definecolor{myframecolor}{rgb}{.647,.129,.149}
\newmdenv[style=mystyle]{important}
   \numberwithin{equation}{section}
        \date{\today}
\title[Phase transition in $O_2$]{Phase
  transition in $O_2$}  
\author{Klaus Thomsen}
\date{\today}
\email{matkt@math.au.dk}
\address{Institut for Matematik, Aarhus University, Ny Munkegade, 8000 Aarhus C, Denmark}
\begin{document}

\maketitle

\begin{center} \emph{I mindet om Uffe Haagerup}
\end{center}

\bigskip

\section{Introduction}\label{sec1}

The Cuntz algebra $O_2$ is the
universal $C^*$-algebra generated by a pair of isometries $V_0,V_1$
such that $V_0V_0^* + V_1V_1^* = 1$, \cite{C}. Let $A = A^* \in O_2$.  For each $t\in
\mathbb R$ the universal property
of $O_2$ guarantees the existence of an endomorphism $\sigma^A_t$  of
$O_2$ such that
\begin{equation}\label{formula}
\sigma^A_t\left(V_i\right) = e^{itA}V_i, \ i = 0,1.
\end{equation}
Under appropriate assumptions on $A$ the family $\sigma^A_t, t \in \mathbb R$, constitutes a continuous
one-parameter family of automorphisms of $O_2$, and it is an
interesting problem to determine the KMS-states for
$\sigma^A$. The main purpose with the present note is to
exhibit an example of such an action where the structure of KMS states
is rich with a relatively dramatic phase transition,
involving an abrupt passage at a certain critical temperature from
uncountably many extremal KMS states to one and
then none. That this can occur in $O_2$ is
perhaps surprising since up to now the one-parameter actions on $O_2$,
or any simple unital Cuntz-Krieger algebra for that matter, for
which it has been possible to determine the structure of KMS states
have all had a unique KMS state, cf. \cite{OP},\cite{E}. In view of
the work of Exel, \cite{E}, it is clear that the possibility of having a
richer structure has to do with the failure of the
Ruelle-Perron-Frobenius theorem for certain potential functions; in
particular, for the function $F$ below. In
this respect, as well as others, the present work is related to the
work of Hofbauer, \cite{H}.

For $u =(i_1,i_2, \cdots , i_n)
\in \{0,1\}^n$, set
$V_u = V_{i_1}V_{i_2} \cdots V_{i_n}$. The elements 
\begin{equation}\label{elements0}
V_{u}V_{u}^*, \  \ u \in \bigcup_n \{0,1\}^n,
\end{equation}
generate an abelian $C^*$-subalgebra of $O_2$ which is
isomorphic to $C\left(\{0,1\}^{\mathbb
    N}\right)$. In the following we identify these two algebras. If the self-adjoint $A$ above comes
from $C\left(\{0,1\}^{\mathbb
    N}\right)$, the formula (\ref{formula}) will define a continuous one-parameter group $\sigma^A$. Therefore, when we define $F : \{0,1\}^{\mathbb N} \to \mathbb R$ such
that 
\begin{equation}\label{F}
F\left((x_i)_{i=1}^{\infty}\right) = \begin{cases} \frac{1}{ \min \left\{i
    : \ x_i = 0\right\} }, & \ (x_i)_{i=1}^{\infty} \neq 1^{\infty} \\ 0,
  & \  (x_i)_{i=1}^{\infty} = 1^{\infty} , \end{cases}
\end{equation}
where $1^{\infty} \in \{0,1\}^{\mathbb N}$ denotes the infinite string
of $1$'s, we have specified a continuous and real-valued function on
$\{0,1\}^{\mathbb N}$ and we can consider the corresponding one-parameter group
$\sigma^F$. Our main result is the following. 

\begin{thm}\label{MAIN1} Let $\beta_0$ be the positive real number for which 
$$
\sum_{k=1}^{\infty} \exp \left( -\beta_0 \sum_{j=1}^k
  \frac{1}{j}\right) \ = \ 1.
$$
There are no $\beta$-KMS states for $\sigma^F$ when $\beta < \beta_0$,
a unique $\beta_0$-KMS state and for $\beta > \beta_0$ the simplex of
$\beta$-KMS states is affinely homeomorphic to the Bauer simplex of
Borel probability measures on the circle $\mathbb T$.
\end{thm}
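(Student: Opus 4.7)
The plan is to analyse any $\beta$-KMS state $\omega$ of $\sigma^F$ through its restriction $\mu=\omega|_{C(X)}$ to the abelian subalgebra $C(X)=C(\{0,1\}^{\mathbb N})$, and then to use the KMS relation together with the vanishing $F(1^\infty)=0$ to identify the residual degrees of freedom concentrated at the fixed point.

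Pushing $e^{itF}$ past $V_i$ via $fV_i=V_i(f\circ\tau_i)$ where $\tau_i(x)=ix$, I first obtain $\sigma^F_{i\beta}(V_u)=V_u\,e^{-\beta S_{|u|}F(u\cdot)}$ with $S_nF=\sum_{j=0}^{n-1}F\circ\sigma^j$ the Birkhoff sum. Applying the KMS identity $\omega(V_uV_u^*)=\omega(V_u^*\sigma^F_{i\beta}(V_u))$ forces $\mu$ to be the $e^{-\beta F}$-conformal measure, $\mu([u])=\int e^{-\beta S_{|u|}F(ux)}\,d\mu(x)$ for every word $u$. For $u=1^{n-1}0$ the sum $S_nF(ux)$ equals the harmonic number $H_n=\sum_{j=1}^n j^{-1}$ independently of $x$, so $\mu([1^{n-1}0])=e^{-\beta H_n}$. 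Decomposing $\{0,1\}^{\mathbb N}=\{1^\infty\}\sqcup\bigsqcup_{n\ge 1}[1^{n-1}0]$ yields $Z(\beta)+\mu(\{1^\infty\})=1$ with $Z(\beta)=\sum_{n=1}^\infty e^{-\beta H_n}$. Since $H_n\sim\log n$, $Z$ is continuous and strictly decreasing from $+\infty$ to $0$ on $(1,\infty)$, so $\beta_0$ is the unique solution of $Z(\beta)=1$ and for $\beta<\beta_0$ no KMS state can exist.

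For off-diagonal elements $\omega(V_uV_v^*)$, a single KMS step combined with $V_\alpha^*V_\beta=\delta_{\alpha\beta}$ kills the case when $u,v$ are not comparable. When $u=vu'$ KMS reduces the value to $\omega(V_{u'}g)$ for a specific $g\in C(X)$; iterating the identity $\omega(V_{u'}g)=\omega\bigl(V_{u'}(g\circ\tau_{u'}^{\,j})\,e^{-\beta S_{j|u'|}F((u')^j x)}\bigr)$ and observing that $S_{|u'|}F((u')^\infty)\ge 1$ as soon as $u'$ contains a $0$ (the periodic orbit is then bounded away from $1^\infty$), the accumulated exponential converges to $0$ uniformly in $x$; hence $\omega(V_uV_v^*)=0$ unless $(u,v)=(w1^m,w1^n)$ for a common prefix $w$ and integers $m,n\ge 0$. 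For these surviving pairs a second iteration along $\tau_{1^{m-n}}$, in which $S_{jk}F(1^{jk}x)=H_{jk+s(x)}-H_{s(x)}\to\infty$ for $x\neq 1^\infty$ and vanishes at $1^\infty$, together with a Cauchy--Schwarz argument in $L^2(\mu)$, gives
\[
\omega(V_{w1^m}V_{w1^n}^*)\;=\;e^{-\beta A_w}\,\psi(m-n),\qquad A_w:=S_{|w|}F(w1^\infty),
\]
where $\psi(k):=\omega\bigl(V_1^k\widehat{1_{\{1^\infty\}}}\bigr)$ is defined in the enveloping von Neumann algebra. Thus $\omega$ is determined by $\mu$ and the sequence $(\psi(k))_{k\in\mathbb Z}$ with $\psi(-k)=\overline{\psi(k)}$.

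When $c:=\mu(\{1^\infty\})=1-Z(\beta)$ vanishes, i.e.\ at $\beta=\beta_0$, the sequence $\psi$ is identically zero and the state is unique (existence follows by taking $\mu\circ E$, where $E$ is the conditional expectation onto the gauge-fixed subalgebra of $O_2$). For $\beta>\beta_0$ the atomic projection $e=\widehat{1_{\{1^\infty\}}}$ satisfies $V_0e=0$ and $V_1e$ acts as a unitary on $eH_\omega$, so positivity of $\omega$ applied to $\bigl(\sum_k\alpha_kV_1^ke\bigr)^*\bigl(\sum_k\alpha_kV_1^ke\bigr)$ forces $(\psi(k)/c)_{k\in\mathbb Z}$ to be positive definite; by the Herglotz theorem there is a unique $\nu\in M_1(\mathbb T)$ with $\hat\nu(k)=\psi(k)/c$. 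The resulting affine map $\omega\mapsto\nu$ embeds the $\beta$-KMS simplex into $M_1(\mathbb T)$; surjectivity is obtained by combining one extremal KMS state with the gauge action $\gamma_z(V_i)=zV_i$ (which commutes with $\sigma^F$ and rotates $\nu$) and barycentric integration to realise arbitrary $\nu$, identifying the simplex with the Bauer simplex $M_1(\mathbb T)$. The main technical obstacle is the construction of this one extremal state: I would realise it as a vector state on $L^2(X\setminus\{1^\infty\},\mu)\oplus\mathbb C\xi$ in which $V_1\xi=\xi$, $V_0$ intertwines the two summands consistently with $V_0V_0^*+V_1V_1^*=1$, and the first summand carries the conformal action, after which the KMS condition is a direct term-by-term check against the formulas above.
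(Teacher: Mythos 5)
Your proposal takes a genuinely different route from the paper. The paper passes to the groupoid picture $O_2\cong C^*(\mathcal G_\sigma)$ and invokes Neshveyev's classification (Theorem~\ref{Nesh}), so that everything reduces to two computations: uniqueness/non-existence of the conformal measure (Lemma~\ref{tuborg3}) and the summability criterion over $\sigma^{-n}(1^\infty)$ (Lemma~\ref{sums}). You instead work directly with the generators $V_uV_{u'}^*$: the KMS identity forces conformality of $\omega|_{C(X)}$, the decomposition $X=\{1^\infty\}\sqcup\bigsqcup_n[1^{n-1}0]$ gives $\sum_n e^{-\beta s_n}+\mu(\{1^\infty\})=1$ (this is exactly the second half of Lemma~\ref{tuborg3}, and it replaces Lemma~\ref{sums}, since the atom's mass is read off directly instead of through the convolution recursion), and the residual degrees of freedom are packaged into the positive-definite sequence $\psi(k)=\omega\bigl(V_1^k\widehat{1_{\{1^\infty\}}}\bigr)$, to which Herglotz's theorem attaches the measure on $\mathbb T$. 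This is an attractive, self-contained substitute for the citation of Neshveyev's Theorems 1.1 and 1.3, and your structural formula $\omega(V_{w1^m}V_{w1^n}^*)=e^{-\beta A_w}\psi(m-n)$ checks out against the paper's states $\phi^\lambda_{1^\infty}$.

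There are, however, two genuine gaps, both on the existence side. First, at $\beta=\beta_0$ the parenthetical ``existence follows by taking $\mu\circ E$'' presupposes that an $e^{\beta_0F}$-conformal probability measure exists; your cylinder recursion proves only uniqueness and the necessary condition $\beta\ge\beta_0$. To close this you must either verify positivity and Kolmogorov consistency of the recursively defined cylinder values at $\beta=\beta_0$ (the delicate case is the words $1^n$), take a weak-$*$ limit of the atomic conformal measures as $\beta\downarrow\beta_0$, or, as the paper does, invoke the closedness of the set of inverse temperatures admitting KMS states (Proposition 5.3.25 in \cite{BR}). Second, for $\beta>\beta_0$ your argument as written only \emph{embeds} the KMS simplex into $M_1(\mathbb T)$; surjectivity needs one extremal state, and the vector-state construction on $L^2(X\setminus\{1^\infty\},\mu)\oplus\mathbb C\xi$ that you defer as ``a direct term-by-term check'' is precisely the representation $\pi_\lambda$ on $\ell^2(\Lambda)$ of Lemma~\ref{b31} together with the verification that $\Tr(H_{\beta}\pi_\lambda(\cdot))/\Tr(H_{\beta})$ is KMS — i.e.\ the actual technical content of the existence half, not a routine afterthought. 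A smaller point worth recording: when you kill $\omega(V_uV_v^*)$ for $u=vu'$ with $u'$ containing a $0$, the iterated KMS identity produces the decay factor $e^{-\beta n/j_0}$ ($j_0$ the position of the first $0$ in $u'$), which tends to $0$ only because $\beta>0$; this is harmless here since $\beta_0>1$, but it should be said.
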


The factor types of the extremal KMS states are
determined in Section \ref{factortypes}. They are all of type
$I_{\infty}$, except the unique $\beta_0$-KMS state which is of type $III_1$.

\smallskip

\emph{Acknowledgement} I thank Johannes Christensen for helpful remarks and
  corrections to the first versions of this paper.

\section{KMS-states on the $C^*$-algebra of a local homeomorphism}\label{sec2}

The proof of Theorem \ref{MAIN1} and the following description of the KMS-states is
based on the work of Neshveyev, \cite{N}, which offers a very general
description of the KMS-states for a class of one-parameter groups of
automorphisms on the (full) $C^*$-algebra of a locally compact
Hausdorff \'etale groupoid. To describe the relevant (and well-known)
groupoid, note that the shift $\sigma$ acts on $\{0,1\}^{\mathbb N}$
as a local homeomorphism, given by $\sigma(x)_i = x_{i+1}$, and recall
that
there is a general construction of an \'etale groupoid from a local
homeomorphism which was introduced by Renault in \cite{Re1} in the
setting relevant for our main result. Since the work of
Renault the construction of the groupoid and the associated
$C^*$-algebra has been extended by Deaconu
\cite{De}, by Anantharaman-Delaroche, \cite{An}, and finally by
Renault \cite{Re2} again. We shall now describe the
groupoid from \cite{An} and explain what information can be obtained by applying
Neshveyev's work to it.

Let $X$ be a locally compact second countable Hausdorff space and
$\varphi : X \to X$ a local homeomorphism.  
Set
$$
\mathcal G_{\varphi} = \left\{ (x, n-m,y) \in X \times \mathbb Z \times X: \  \varphi^n(x) = \varphi^m(y) \right\} 
$$
which is a groupoid with product $(x,k,y) (y,l,z) = (x,k+l,y)$ and
inversion $(x,k,y)^{-1} = (y,-k,x)$. Sets of the form
$$
\left\{ (x,n-m,y) :  \ \varphi^n(x) = \varphi^m(y), \  x \in W, \ y \in V\right\}, 
$$
for some open subsets $V, W \subseteq X$ constitute a basis for a topology in $\mathcal G_{\varphi}$ which
turns it into a locally compact second countable Hausdorff \'etale
groupoid. As explained by Renault in \cite{Re2} the groupoid $\mathcal
G_{\varphi}$ is amenable and hence the full and reduced groupoid
$C^*$-algebras of $\mathcal G_{\varphi}$ coincide, cf. \cite{Re1}. We
denote this $C^*$-algebra by $C^*\left(\mathcal
  G_{\varphi}\right)$.

Let $F : X \to \mathbb R$ be a continuous function. We can then define
a continuous homomorphism
$c_F : \mathcal G_{\varphi} \to \mathbb R$ such that
$$
c_F(x,k,y) = \lim_{n \to \infty} \left(\sum_{i=0}^{n+k} F\left(\varphi^i(x)\right) - \sum_{i=0}^n
F\left(\varphi^i(y)\right)\right) .
$$
There is a one-parameter group $\sigma^F$
on $C^*(\mathcal G_{\varphi})$ defined such that
$$
\sigma^F_t(f)(x,k,y) = e^{it c_F(x,k,y)} f(x,k,y)
$$
when $f \in C_c(\mathcal G_{\varphi})$, cf. \cite{Re1}. Let $\beta \in \mathbb R$. To describe the $\beta$-KMS-states of
$\sigma^F$ we say that a Borel measure $m$ on $X$ is
\emph{$e^{\beta  F}$-conformal} for $\varphi$ when 
\begin{equation}\label{conformal}
m\left(\varphi(A)\right) = \int_A e^{\beta F(x)} \ dm(x)
\end{equation} 
for every Borel subset $A \subseteq X$ such that $\varphi : A \to X$
is injective. This definition is due to Denker and Urbanski,
\cite{DU}, motivated by work of D. Sullivan. By Lemma 3.2 in \cite{Th2} the
condition is equivalent to $m$ being $(\mathcal G_{\varphi},
c_F)$-conformal with exponent $\beta$ as defined in \cite{Th1}. A $e^{\beta F}$-conformal
Borel probability measure $m$ on $X$ gives therefore rise to a $\beta$-KMS state
$\phi_m$ for $\sigma^F$ such that
\begin{equation}\label{no-atoms}
\phi_m(f) = \int_X f(x,0,x) \ dm(x) 
\end{equation}
when $f \in C_c\left(\mathcal G_{\varphi}\right)$. The calculation
necessary to confirm this was first performed by Renault, cf. Proposition 5.4
in \cite{Re1}. One
of the main achievements in \cite{N} is the realization that KMS states
of this sort do not exhaust them all.

Let $x \in X$ be a $\varphi$-periodic point such that
\begin{equation}\label{per}
\sum_{j=0}^{p-1} F\left(\varphi^j(x)\right) = 0
\end{equation}
when $p$ is the minimal period of $x$. Assume that $\beta \in \mathbb R$ is such that
\begin{equation}\label{sum}
 M = \sum_{n=1}^{\infty}  \ \sum_{y \in Y_n}  \exp \left( -\beta \sum_{j=0}^{n-1}
  F\left(\varphi^j(y)\right)\right) <  \infty ,
\end{equation}
where
$$
Y_n = \varphi^{-n}(x) \backslash \bigcup_{j=0}^{n-1} \varphi^{-j}(x).
$$ 
For $y\in X$, let $\delta_y$ be the Dirac probability measure at $y$. Then
$$
m_x = (1+M)^{-1} \left(\delta_x +\sum_{n=1}^{\infty}  \ \ \sum_{y \in
    Y_n}  \exp \left( -\beta \sum_{j=0}^{n-1}
  F\left(\varphi^j(y)\right)\right) \delta_y\right)
$$
is a Borel probability measure and it is easy to check, using (\ref{per}), that
$m_x$ is $e^{\beta F}$-conformal. For $\lambda \in \mathbb T$ and $y
\in \bigcup_{n=0}^{\infty} \varphi^{-n}(x)$, the map
$(y,kp,y) \mapsto \lambda^k$
is a character of the isotropy group
$$
\left(\mathcal G_{\varphi}\right)^y_y = \left\{ (y,kp,y): \ k \in \mathbb Z
\right\} .
$$ 
Since $m_x$ is supported on $ \bigcup_{n=0}^{\infty} \varphi^{-n}(x)$
we can consider this as a $m_x$-measurable field of states on the isotropy groups of $\mathcal
  G_{\varphi}$ in the sense of \cite{N}. It follows therefore from
  Theorem 1.1 in \cite{N} that there is a state $\phi^{\lambda}_x$ on
  $C^*(\mathcal G_{\varphi})$ such that
$$
\phi^{\lambda}_x(f) = \int_X \sum_{k \in \mathbb Z} \lambda^k
f(y,kp,y) \ dm_x(y) 
$$
for all $f \in C_c\left(\mathcal G_{\varphi}\right)$. Furthermore, it
follows from Theorem 1.3 in \cite{N} that $\phi^{\lambda}_x$ is a
$\beta$-KMS state. These facts follow also from the following lemma. To
formulate it, set $\Lambda = \bigcup_{n=0}^{\infty} \varphi^{-n}(x)$
and for $y\in \Lambda$ set
$$
|y| = \min \left\{ k \in \mathbb N: \varphi^k(y) = x\right\}.
$$
Define a one-parameter group $U_t, t\in
  \mathbb R$, of unitaries on $l^2\left( \Lambda\right)$ such that
\begin{equation}\label{unitary}
U_t\psi(y) = \exp \left(it\sum_{j=0}^{|y|-1}F\left(\varphi^j(y)\right)\right)\psi(y) 
\end{equation}
and a positive operator $H_{\beta}$ by 
\begin{equation}\label{b32p}
H_{\beta}\psi(y) = \exp \left( - \beta \sum_{j=0}^{|y|-1}
  F\left(\varphi^j(y)\right)\right)  \psi(y) .
\end{equation}
It follows from (\ref{sum}) that $H_{\beta}$ is trace-class. Note that $H_{\beta} = U_{i\beta}$.

\begin{lemma}\label{b31} For each $\lambda \in \mathbb T$ there is an irreducible $*$-representation
$\pi_{\lambda}$ of $C^*(\mathcal G_{\varphi})$ on $l^2\left(\Lambda\right)$ such that
\begin{equation}\label{equi}
\pi_{\lambda} \circ \sigma^F_t = \Ad U_t \circ \pi_{\lambda}
\end{equation}
for all $t \in \mathbb R$, and
\begin{equation*}\label{a23}
\phi^{\lambda}_{x}(a) = \frac{\Tr (H_{\beta}\pi_{\lambda}(a))}{\Tr(H_{\beta})},
\end{equation*}
for all $a \in C^*(\mathcal G_{\varphi})$.
\end{lemma}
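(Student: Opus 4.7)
The plan is to realise $\pi_\lambda$ as the representation of $C^*(\mathcal G_\varphi)$ induced from the one-dimensional character $\chi_\lambda(x, kp, x) = \lambda^k$ of the isotropy group $(\mathcal G_\varphi)^x_x$. A short combinatorial argument, using essentially that $p$ is the \emph{minimal} period of $x$, shows that for $y, z \in \Lambda$ the arrows in $\mathcal G_\varphi$ from $y$ to $z$ are precisely the elements $(z, |z|-|y|+kp, y)$ indexed by $k \in \mathbb Z$. Motivated by this parametrisation I would set
\begin{equation*}
\langle \pi_\lambda(f)\delta_y,\delta_z\rangle \;=\; \sum_{k \in \mathbb Z}\lambda^k\, f\bigl(z,|z|-|y|+kp,y\bigr), \qquad y,z \in \Lambda,
\end{equation*}
for $f \in C_c(\mathcal G_\varphi)$, and verify by a direct computation with the convolution and involution on $C_c(\mathcal G_\varphi)$ that $\pi_\lambda$ is a $*$-homomorphism. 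An $I$-norm estimate combined with amenability of $\mathcal G_\varphi$ produces the required extension to all of $C^*(\mathcal G_\varphi)$.

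Both the equivariance (\ref{equi}) and the trace identity hinge on one cocycle calculation. Writing $S(y) := \sum_{j=0}^{|y|-1} F(\varphi^j(y))$ for $y \in \Lambda$, so that $U_t\delta_y = e^{itS(y)}\delta_y$ and $H_\beta\delta_y = e^{-\beta S(y)}\delta_y$, I would telescope the defining limit of $c_F$ to obtain
\begin{equation*}
c_F\bigl(z,|z|-|y|+kp,y\bigr) \;=\; S(z) - S(y) + k\sum_{j=0}^{p-1}F(\varphi^j(x)) \;=\; S(z) - S(y),
\end{equation*}
the second equality being precisely the vanishing assumption (\ref{per}). Consequently $\pi_\lambda(\sigma^F_t(f))$ differs from $\pi_\lambda(f)$ by the matrix coefficient $e^{itS(z)}e^{-itS(y)}$, which is exactly the effect of $\Ad U_t$, establishing (\ref{equi}). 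Taking the trace of $H_\beta\pi_\lambda(f)$ term by term gives $\sum_{y\in\Lambda} e^{-\beta S(y)}\sum_{k \in \mathbb Z}\lambda^k f(y,kp,y)$, and identifying $\sum_{y} e^{-\beta S(y)} = 1 + M$ yields the claimed formula on the dense subalgebra $C_c(\mathcal G_\varphi)$.

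Irreducibility would follow from the observation that $\pi_\lambda$ restricted to $C(X) \subseteq C^*(\mathcal G_\varphi)$ acts by multiplication on the basis $\{\delta_y\}_{y\in\Lambda}$, so every operator in the commutant is diagonal. Given distinct $y,z \in \Lambda$, I would choose an open bisection around the canonical arrow $(z,|z|-|y|,y)$ containing no other arrow from $y$ to $z$, and pick $f \in C_c(\mathcal G_\varphi)$ supported there with $f(z,|z|-|y|,y) = 1$; then $\langle \pi_\lambda(f)\delta_y,\delta_z\rangle = 1$, which forces any diagonal element of the commutant to take equal values at $y$ and $z$. The commutant therefore collapses to scalars. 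The main obstacle I anticipate is less conceptual than notational: disentangling the enumeration of arrows cleanly so that the definition of $\pi_\lambda$ is unambiguous and the $*$-homomorphism property is transparent. Once the cocycle identity $c_F = S(z) - S(y)$ is extracted from (\ref{per}), everything else is essentially forced.
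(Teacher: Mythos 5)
Your construction is the same as the paper's: the matrix coefficients $\langle\pi_\lambda(f)\delta_y,\delta_z\rangle=\sum_k\lambda^k f(z,|z|-|y|+kp,y)$ are exactly the author's formula $\pi_\lambda(f)\psi(y)=\sum_{z,k}\lambda^k f(y,|y|-|z|+kp,z)\psi(z)$, bounded by the same $I$-norm estimate, and your cocycle identity $c_F(z,|z|-|y|+kp,y)=S(z)-S(y)$ via (\ref{per}) is precisely the detail the paper leaves to the reader for (\ref{equi}). Your verifications of the trace formula and of irreducibility (diagonal commutant plus transitivity on $\Lambda$) correctly fill in the remaining steps the paper declares ``straightforward,'' so the proposal is correct and follows the paper's route.
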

\begin{proof} When $\varphi, \psi \in l^2(\Lambda)$ and $f\in
  C_c(\mathcal G_{\varphi})$ we have the estimate 
\begin{equation*}\label{a13}
\begin{split}
& \left(\sum_{y, z, k} |\varphi(y)| \left|f\left(y , |y| -
|z| + kp, z\right)\right| |\psi(z)| \right)^2 \\
& \leq \sum_{y, z, k} |\varphi(y)|^2 \left|f\left(y, |y| -
|z| + kp, z \right)\right| \sum_{y', z', k'} |\psi(z')|^2 \left|f\left(y' , |y'| -
|z'| + k'p, z'\right)\right| \\
& \leq \left\|f\right\|_I^2 \sum_{y} |\varphi(y)|^2
 \sum_{z} |\psi(z)|^2 ,
\end{split}
\end{equation*}
where $\left\|f\right\|_I$ is the $I$-norm of Renault,
cf. \cite{Re1}. Since $\|f\|_I < \infty$, it follows that
\begin{equation*}\label{a13}
\pi_{\lambda}(f)\psi(y) = \sum_{z\in \Lambda}\sum_{k \in \mathbb Z} \lambda^k f\left(y, |y| -|z| + kp, z\right)\psi(z) 
\end{equation*}
defines an element $\pi_{\lambda}(f)\psi \in
  l^2\left(\Lambda\right)$ when $\psi \in
  l^2\left(\Lambda\right)$. Furthermore, we see that $\pi_{\lambda}(f)$ is a bounded operator on $l^2(\Lambda)$
with $\left\|\pi_{\lambda}(f)\right\| \leq\|f\|_I$. It follows that
$\pi_{\lambda}$ extends to a map $\pi_{\lambda} :C^*\left(\mathcal
  G_{\varphi}\right) \to B\left(l^2\left(\Lambda\right)\right)$. It is
straightforward to verify that $\pi_{\lambda}$ is an irreducible
$*$-representation with the stated properties. We leave the details
with the reader who will then see how condition (\ref{per}) is
used to establish (\ref{equi}). 
\end{proof}

It follows that a $p$-periodic point $x$ of $\varphi$ for which
(\ref{per}) and (\ref{sum}) both hold gives rise to a $\beta$-KMS state
$\phi^{\mu}_x$ for every Borel probability measure $\mu$ on the circle
such that
$$
\phi^{\mu}_x = \int_{\mathbb T} \phi^{\lambda}_x \ d\mu(\lambda) .
$$  
In this way such a periodic orbit gives rise to a face in the simplex of
$\beta$-KMS states which is affinely homeomorphic to the Bauer simplex
of Borel probability measures on the circle. Note that the faces of
$\beta$-KMS states coming from different periodic points are the same when
the periodic points are in the same orbit and disjoint otherwise. Note
also that it follows from Lemma \ref{b31} that $\varphi^{\lambda}_x$
is an extremal $\beta$-KMS state, cf. Theorem 5.3.30 (3) in
\cite{BR}. If we assume that the $\varphi$-periodic points are
countable, and if $\beta \neq 0$, it follows from Theorem 2.4
in \cite{Th1} that the $\beta$-KMS state $\varphi_m$ of
(\ref{no-atoms}) arising from a \emph{continuous} (or non-atomic) $e^{\beta F}$-conformal Borel
probability measure $m$ is extremal if and only of $m$ is extremal in
the set of $e^{\beta  F}$-conformal Borel probability
measures. But note that the 'if'-part is not true for measures with
atoms. For example, the measure $m_x$ is extremal among the $e^{\beta
  F}$-conformal measures while
$$
\phi_{m_x} = \int_{\mathbb T} \phi^{\lambda}_x \ d\lambda
$$    
when we integrate with respect to Lebesgue measure $d\lambda$.

We have
now described the extremal KMS states that are relevant for the example considered in this paper, but for completeness we describe them all. The \emph{full orbit} $\mathcal O(z)$ of a point $z
\in X$ is the set
points $y \in X$ such that
\begin{equation}\label{ret1}
\varphi^n(z) = \varphi^m(y) 
\end{equation}
for some $n,m \in \mathbb N$. An element $z \in X$ is 
\emph{aperiodic} when $\mathcal O(z)$ does not contain a periodic
orbit. For such a point $z$ we can define
$\mathcal F : \mathcal O(z) \to \mathbb R$ such that
$$
\mathcal F (y) = \sum_{j=0}^{m-1} F\left(\varphi^j(y)\right) -
\sum_{j=0}^{n-1} F\left(\varphi^j(z)\right) ,
$$
where $n,m$ are numbers such that (\ref{ret1}) holds. For $\beta \in
\mathbb R$ we say that $z$ is \emph{$\beta$-summable} when
$$
M = \sum_{y \in \mathcal O(z)} e^{-\beta \mathcal F(y)} < \infty ,
$$
and we can then consider the Borel probability measure
$$
m_z = M^{-1} \sum_{y\in \mathcal O(z)} e^{-\beta \mathcal F(y)}
\delta_y .
$$ 
This is a $e^{\beta F}$-conformal measure and we can consider
the corresponding state $\phi_{m_z}$ which is an extremal
$\beta$-KMS state. By Theorem 1.3 in \cite{N} and Theorem 2.4 in \cite{Th1} we have now
described all extremal $\beta$-KMS states for
$\sigma^F$ and can summarize with the following 

\begin{thm}\label{Nesh} Assume that the periodic points of $\varphi$
  are countable. Let $\beta \in \mathbb R\backslash \{0\}$. The
  extremal $\beta$-KMS states for $\sigma^F$ are 
\begin{enumerate} 
\item[$\bullet$] the states $\phi_m$,
  where $m$ is an extremal and continuous $e^{\beta F}$-conformal
  Borel probability measure $m$ on $X$, 
\item[$\bullet$] the states
  $\varphi^{\lambda}_x$, where $\lambda \in \mathbb T$ and $x$ is a $p$-periodic
  point for $\varphi$ for which (\ref{per}) and (\ref{sum}) both
  hold, and
\item[$\bullet$] the states $\phi_{m_z}$, where $z\in X$ is aperiodic and $\beta$-summable.
\end{enumerate}
\end{thm}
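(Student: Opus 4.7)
The plan is to apply Neshveyev's Theorem 1.3 in \cite{N} together with Theorem 2.4 in \cite{Th1}, and then carry out a case analysis driven by the atomic structure of the conformal measure. By Neshveyev's theorem, every $\beta$-KMS state for $\sigma^F$ on $C^*(\mathcal G_{\varphi})$ arises from a pair $(m,\{\psi_y\})$, where $m$ is an $e^{\beta F}$-conformal Borel probability measure on $X$ and $\{\psi_y\}$ is an $m$-measurable, $\mathcal G_{\varphi}$-equivariant field of states on the isotropy groups $(\mathcal G_{\varphi})^y_y$, the state being
\[
\phi(f) \ = \ \int_X \sum_{\gamma \in (\mathcal G_{\varphi})^y_y} f(\gamma)\psi_y(\gamma) \, dm(y).
\]
Extremality of $\phi$ corresponds, via Choquet theory and Neshveyev's bijection, to $m$ being ergodic for the orbit equivalence relation induced by $\mathcal G_{\varphi}$ and to $\{\psi_y\}$ being extremal $m$-a.e.

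First I would handle the non-atomic case. If $m$ has no atoms then, since the set of periodic points of $\varphi$ is by hypothesis countable, the union $\bigcup_{x} \mathcal O(x)$ taken over periodic $x$ is a countable $m$-null set; hence $m$-almost every $y$ has trivial isotropy in $\mathcal G_{\varphi}$, the field $\{\psi_y\}$ contributes nothing, and the state is $\phi_m$ as in (\ref{no-atoms}). Theorem 2.4 in \cite{Th1} (applied with $\beta \neq 0$) then asserts that $\phi_m$ is extremal if and only if $m$ is extremal in the simplex of continuous $e^{\beta F}$-conformal probability measures, which is the first bullet.

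Second, I would handle the atomic case. If $m$ has an atom, ergodicity forces it to be supported on the full orbit of a single point $y$. When $\mathcal O(y)$ meets a periodic orbit through $x$ of minimal period $p$, applying the conformality identity (\ref{conformal}) around the cycle forces (\ref{per}), and propagating conformality over pre-images of $x$ pins $m$ down to $m_x$, with (\ref{sum}) needed for normalization. The isotropy at every point of $\mathcal O(x)$ is $\{(z,kp,z): k \in \mathbb Z\} \cong \mathbb Z$, and the extremal $\mathcal G_{\varphi}$-equivariant fields of characters are parametrized by $\lambda \in \mathbb T$ via $k \mapsto \lambda^k$, giving the states $\phi^{\lambda}_x$; their extremality follows from the irreducibility of $\pi_{\lambda}$ in Lemma \ref{b31} together with Theorem 5.3.30(3) in \cite{BR}. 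If instead $\mathcal O(y)$ contains no periodic point, then $y$ is aperiodic, all isotropy along $\mathcal O(y)$ is trivial, and conformality rigidly determines $m = m_z$ with $z = y$; finiteness of the normalization is precisely the $\beta$-summability hypothesis, yielding the third bullet.

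The main obstacle will be the rigidity step: verifying that in an extremal KMS state whose conformal measure is atomic, ergodicity and the conformality identity together force $m$ to coincide with one of the explicit measures $m_x$ or $m_z$ supported on a single full orbit, together with the extraction of (\ref{per}) from conformality around a periodic cycle. Once this is in hand, everything else reduces to the cited results: Theorem 1.3 in \cite{N} supplies the parametrization, Theorem 2.4 in \cite{Th1} sharpens the non-atomic case, and Lemma \ref{b31} secures the extremality of the periodic atomic states.
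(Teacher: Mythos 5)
Your proposal is correct and follows essentially the same route as the paper: the theorem is obtained there as a summary of the preceding discussion in Section \ref{sec2}, which likewise invokes Theorem 1.3 of \cite{N} for the parametrization by conformal measures and measurable fields of states on isotropy groups, Theorem 2.4 of \cite{Th1} for extremality in the continuous case, and Lemma \ref{b31} together with Theorem 5.3.30(3) of \cite{BR} for the extremality of the $\phi^{\lambda}_x$. Your derivation of (\ref{per}) by iterating the conformality identity around the periodic cycle, and your observation that the character field (rather than the trivial field) is needed for extremality at an atom, match the paper's remarks about $\phi_{m_x}=\int_{\mathbb T}\phi^{\lambda}_x\,d\lambda$ not being extremal.
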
  

When $\varphi$ is surjective and does not change sign, in the sense
that either
$F(x) \geq 0$ for all $x \in X$ or $F(x) \leq 0$ for all $x \in X$, no aperiodic point can be $\beta$-summable for any $\beta$, and there
are then only the two first-mentioned classes of KMS-states to consider.

\section{Proof of the main result}

We apply Theorem \ref{Nesh} to the shift $\sigma$ on $\{0,1\}^{\mathbb
  N}$ and the function $F$ defined by (\ref{F}). For this note that
there is a $*$-isomorphism $O_2 \simeq C^*\left(\mathcal
  G_{\sigma}\right)$ sending the isometry $V_i \in O_2$ to the
characteristic function of the set $\left\{ (x,1,\sigma(x)) \in \mathcal G_{\sigma} : \ x_1 = i 
\right\}, \ i = 0,1$. Under this isomorphism the one-parameter group $\sigma^F$ from Section
\ref{sec1} is turned into the one-parameter group $\sigma^F$ from Section
\ref{sec2}.

\begin{lemma}\label{tuborg3} For each $\beta \in \mathbb R$ there is at
  most one $e^{\beta F}$-conformal probability measure for $\sigma$, and 
  none if $\beta < \beta_0$.
\end{lemma}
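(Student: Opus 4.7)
The plan is to use iterated conformality to pin down the values of $m$ on every cylinder set, thereby obtaining uniqueness and an explicit normalization constraint that fails for $\beta < \beta_0$. Write $C_v = \{x \in \{0,1\}^{\mathbb{N}} : x_i = v_i,\ 1 \leq i \leq n\}$ for $v = (v_1,\dots,v_n)$. The shift $\sigma$ is injective on each such $C_v$ and maps it onto $C_{(v_2,\dots,v_n)}$ (onto $\{0,1\}^{\mathbb{N}}$ when $n = 1$), so (\ref{conformal}) applies directly. On $C_{1^{k-1}0}$ the function $F$ is constantly $1/k$, so applying (\ref{conformal}) to $C_0$ gives $m(C_0) = e^{-\beta}$, and to $C_{1^{k-1}0}$ for $k \geq 2$ yields the recursion $m(C_{1^{k-2}0}) = e^{\beta/k}\, m(C_{1^{k-1}0})$. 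Writing $H_k = \sum_{j=1}^k 1/j$, induction gives $m(C_{1^{k-1}0}) = e^{-\beta H_k}$ for $k \geq 1$. Since $\{0,1\}^{\mathbb{N}}$ is partitioned by $\{1^\infty\}$ together with the cylinders $C_{1^{k-1}0}$ ($k \geq 1$), the normalization forces
\[
m(\{1^\infty\}) = 1 - \sum_{k=1}^\infty e^{-\beta H_k}.
\]

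For $\beta < \beta_0$ the right-hand side is strictly negative: the series diverges when $\beta \leq 1$, and for $1 < \beta < \beta_0$ it exceeds $1$ by the defining property of $\beta_0$ together with strict monotonicity of the series in $\beta$. Either way this is impossible for a probability measure, so no $e^{\beta F}$-conformal $m$ exists below $\beta_0$.

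For uniqueness, iterating (\ref{conformal}) $n$ times gives $1 = \int_{C_v} e^{\beta S_n F}\, dm$, with $S_n F = \sum_{j=0}^{n-1} F \circ \sigma^j$; when $v_n = 0$ the Birkhoff sum $S_n F$ is constant on $C_v$, so $m(C_v)$ is determined outright by $\beta$. For a cylinder $C_v$ with $v_n = 1$ one decomposes
\[
C_v = \Bigl(\bigsqcup_{a \geq 0} C_{v\, 1^a 0}\Bigr) \sqcup \{v\, 1^\infty\},
\]
where the singleton is either $\{1^\infty\}$ (when $v = 1^n$) or a strictly pre-periodic point; in the latter case, applying (\ref{conformal}) to singletons gives $m(\{v\, 1^\infty\}) = e^{-\beta S_n F(v\, 1^\infty)}\, m(\{1^\infty\})$. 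Since $m(\{1^\infty\})$ is pinned down by the normalization identity above, $m(C_v)$ is determined by $\beta$ alone for every cylinder, hence $m$ is unique. The main subtlety is the bookkeeping for cylinders ending in $1$, where the infinite-tail atom $v\, 1^\infty$ must be accounted for; that is why the mass at the fixed point $1^\infty$ drives the whole argument.
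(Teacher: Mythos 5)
Your proof is correct and follows essentially the same route as the paper's: the conformality relation (\ref{conformal}) pins down the value of the measure on every cylinder set (which determines a regular Borel measure), and the disjoint family $[0],[10],[110],\dots$ forces $\sum_{k=1}^{\infty}\exp\bigl(-\beta\sum_{j=1}^{k}\tfrac{1}{j}\bigr)\leq 1$, i.e.\ $\beta\geq\beta_0$. The only (harmless) divergence is in the treatment of cylinders ending in $1$: the paper determines $\mu([1^{n+1}])$ from the finite normalization $\sum_{u\in\{0,1\}^{n+1}}\mu([u])=1$ at each level, whereas you decompose such a cylinder into the cylinders $C_{v1^a0}$ together with the atom at $v1^{\infty}$, whose mass you propagate from $m(\{1^{\infty}\})$ via conformality on singletons.
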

\begin{proof} For every word $u \in \mathbb \{0,1\}^n$ we let $[u]$ denote
  the corresponding cylinder set,
$$
[u] = \left\{ \left(x_i\right)_{i=1}^{\infty} \in \{0,1\}^{\mathbb N}:
  \ x_1x_2\cdots x_n = u \right\} .
$$
It follows from (\ref{conformal}) that a $e^{\beta F}$-conformal Borel
probability measure $\mu$
must satisfy 
$$
e^{\beta} \mu([0]) = \mu\left(\{0,1\}^{\mathbb N}\right) = 1,
$$
and hence $\mu([0]) = e^{-\beta}$ and $\mu([1]) = 1-e^{-\beta}$. Now
assume that we have determined $\mu([w])$ for every word $w \in
\{0,1\}^n$. Let $w$ be such a word. Then
$$
\mu([w]) = \int_{[0w]} e^{\beta F(x)} \ d\mu(x) = e^{\beta} \mu([0w]),
$$
and we conclude that $\mu([0w]) = e^{-\beta} \mu([w])$. Assume then
that $w$ does not consist entirely of $1$'s, and let $j$ be position
of the first $0$ occurring in $w$. Then
$$
\mu([w]) = \int_{[1w]} e^{\beta F(x)} \ d\mu(x) = e^{\frac{\beta}{j+1}} \mu([1w]),
$$
and hence $ \mu([1w]) =   e^{-\frac{\beta}{j+1}} \mu([w])$. In this
way we determine the value $\mu([u])$ for every word $u\in \{0,1\}^{n+1}$,
except the word consisting entirely of $1$'s. However, that number is
then determined by the condition that
$$
\sum_{u \in \ \{0,1\}^{n+1}} \mu([u]) = 1.
$$ 
This proves the uniqueness of a $e^{\beta F}$-conformal Borel
probability measure since regular Borel measures are determined by the
values they give to cylinder sets.

To prove that there can not be any $e^{\beta F}$-conformal probability
measure unless $\beta \geq \beta_0$, observe that the sets
$[0],[10],[110],[1110], \cdots $, are mutually disjoint, and that for any
Borel measure $\mu$ satisfying (\ref{conformal}) we have 
$$
\mu([1^{k-1}0]) = \exp\left(-\beta \sum_{j=1}^k
  \frac{1}{j}\right)\mu\left(\{0,1\}^{\mathbb N}\right) .
$$ 
If $\mu$ is a probability measure we must therefore have that
$
\sum_{k=1}^{\infty} \exp\left(-\beta \sum_{j=1}^k \frac{1}{j}\right) \leq 1$,
i.e. $\beta \geq \beta_0$.

\end{proof}

\begin{lemma}\label{sums} Let $\beta \in \mathbb R$. Then 
$$
\sum_{n=1}^{\infty} \ \sum_{ x \in 
    \sigma^{-n}\left(1^{\infty}\right) \backslash \sigma^{-n+1}\left(1^{\infty}\right)} \exp\left(-\beta \sum_{j=0}^{n-1}F(\sigma^j(x))\right) < \infty
$$ 
if and only if $\beta > \beta_0$.
\end{lemma}
\begin{proof} Set $Y_0 = \left\{1^{\infty}\right\}$ and $Y_n =
  \sigma^{-n}\left(1^{\infty}\right)  \backslash
  \sigma^{-n+1}\left(1^{\infty}\right), n \geq 1$. Then
\begin{equation}\label{sum2}
Y_n = 0Y_{n-1} \sqcup 10Y_{n-2} \sqcup 110Y_{n-3} \sqcup 1110Y_{n-4} \sqcup \cdots \sqcup 1^{n-1}0Y_0 . 
\end{equation}
Set 
$$
Z_n = \sum_{x \in Y_n}  \exp\left(-\beta
  \sum_{j=0}^{n}F(\sigma^j(x))\right) , 
$$
for $n \geq  0$, and
$s_k = 1 + \frac{1}{2} + \frac{1}{3} + \cdots + \frac{1}{k}$ for $k \geq 1$. It follows from (\ref{sum2}) that
\begin{equation}\label{sum5}
Z_n = e^{-\beta s_1} Z_{n-1} + e^{-\beta s_2} Z_{n-2} + \cdots +
e^{-\beta s_n} Z_0 
\end{equation}
for all $n \geq 1$, and then from (\ref{sum5}) that
\begin{equation}\label{sum6}
\sum_{n=1}^N Z_n \leq\left(\sum_{n=0}^N Z_n\right)\left(\sum_{k=1}^N
  e^{-\beta s_k}\right) \leq \sum_{n=1}^{2N} Z_n 
\end{equation}
for all $N \geq 1$. It is straightforward to deduce from (\ref{sum6}) that
$\sum_{k=1}^{\infty} e^{- \beta s_k} < 1$ if and only if $\sum_{n=1}^{\infty} Z_n <
\infty$. This completes the proof.
\end{proof}

Except for the existence of a $\beta_0$-KMS state, Theorem \ref{MAIN1} follows now by combining Lemma \ref{tuborg3} and
Lemma \ref{sums} with Theorem \ref{Nesh}. That there \emph{is} a
$\beta_0$-KMS state follows from the fact that set of $\beta$'s for
which there is a $\beta$-KMS state is closed by Proposition 5.3.25 in \cite{BR}.

\section{Factor types}\label{factortypes}

\begin{thm}\label{types} For $\beta > \beta_0$ the von Neumann algebra
  factor generated by the GNS representation of an extremal $\beta$-KMS state for $\sigma^F$ is
  of type $I_{\infty}$. The von Neumann algebra generated by the GNS
  representation of the unique $\beta_0$-KMS state for $\sigma^F$ is
  the hyper-finite $III_1$-factor.
\end{thm}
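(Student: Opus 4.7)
The plan is to treat the two regimes of Theorem \ref{types} separately, since they rely on quite different representation-theoretic mechanisms.

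For $\beta > \beta_0$, note first that the only $\sigma$-periodic point $x$ satisfying both (\ref{per}) and (\ref{sum}) is $x = 1^\infty$: since $F \geq 0$, condition (\ref{per}) forces $F$ to vanish on every iterate of $x$, which happens only at $1^\infty$, and Lemma \ref{sums} then gives (\ref{sum}) exactly when $\beta > \beta_0$. Hence, by Theorem \ref{Nesh}, every extremal $\beta$-KMS state has the form $\phi^\lambda_{1^\infty}$ for some $\lambda \in \mathbb T$. Lemma \ref{b31} realizes it as $\phi^\lambda_{1^\infty}(a) = \Tr(H_\beta \pi_\lambda(a))/\Tr(H_\beta)$ with $\pi_\lambda$ irreducible on $\ell^2(\Lambda)$ and $H_\beta$ an injective positive trace-class operator. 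Since $\pi_\lambda(C^*(\mathcal G_\sigma))'' = B(\ell^2(\Lambda))$ and the formula defines a faithful normal state on this algebra, the GNS representation of $\phi^\lambda_{1^\infty}$ is quasi-equivalent to $\pi_\lambda$, so the resulting factor is $B(\ell^2(\Lambda))$, of type $I_\infty$.

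For $\beta = \beta_0$, the unique KMS state is $\phi_\mu$, where $\mu$ is the unique $e^{\beta_0 F}$-conformal probability measure from Lemma \ref{tuborg3}. I would first verify that $\mu$ is non-atomic: the only periodic orbit that could carry an atom is $\{1^\infty\}$, but the critical identity $\sum_{k=1}^\infty \exp(-\beta_0 s_k) = 1$ combined with $\mu([1^k]) = 1 - \sum_{j=1}^{k} \exp(-\beta_0 s_j)$ forces $\mu(\{1^\infty\}) = 0$. Uniqueness of $\mu$ then yields ergodicity of the tail equivalence relation $\mathcal R_\sigma$: a tail-saturated Borel set $E$ with $0 < \mu(E) < 1$ would make $\mu(\,\cdot\, \cap E)/\mu(E)$ a second $e^{\beta_0 F}$-conformal probability measure, contradicting Lemma \ref{tuborg3}. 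Consequently the GNS factor of $\phi_\mu$ identifies with the Krieger factor of the non-singular ergodic measured equivalence relation $(\mathcal R_\sigma, \mu)$ with Radon--Nikodym cocycle $e^{\beta_0 c_F}$; its hyperfiniteness follows from the amenability of $\mathcal G_\sigma$ recorded in Section \ref{sec2}.

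The remaining, and main, task is to pin down the type as $III_1$ by computing the asymptotic ratio set of this cocycle. I would produce explicit partial isomorphisms realizing a dense set of ratios: for integers $a > b \geq 1$ the map $T_{a,b} : [1^{b-1}0] \to [1^{a-1}0]$, $1^{b-1}0z \mapsto 1^{a-1}0z$, lies in $\mathcal R_\sigma$ and a direct computation using (\ref{F}) yields Radon--Nikodym derivative $\exp\bigl(\beta_0 \sum_{k=b+1}^{a} 1/k\bigr)$ pointwise on the positive-measure cylinder $[1^{b-1}0]$. Since the partial sums $\sum_{k=b+1}^{a} 1/k$ can be made arbitrarily small (take $a = b+1$ with $b$ large) while also approximating any positive real via $\log(a/b)$ asymptotics and the divergence of the harmonic series, these ratios are dense in $\mathbb R_+^*$. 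A closed subgroup of $\mathbb R_+^*$ that is dense must be all of $\mathbb R_+^*$, so by Connes' classification the factor is of type $III_1$. The hard step will be verifying cleanly that each such exponential value enters the Connes--Krieger ratio set, i.e.\ that the Radon--Nikodym derivative of $T_{a,b}$ is essentially constant on a positive-measure subdomain; this should follow from the explicit product form of $e^{\beta_0 c_F}$ on these cylinders together with the ergodicity established above.
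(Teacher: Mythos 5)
Your treatment of the case $\beta > \beta_0$ is correct and coincides with the paper's: Lemma \ref{b31} exhibits $\phi^{\lambda}_{1^{\infty}}$ as a normal state with an injective positive trace-class density in an irreducible representation, so its GNS factor is $B\left(l^2(\Lambda)\right)$, of type $I_{\infty}$. For $\beta = \beta_0$ you propose a genuinely different route: Krieger's asymptotic ratio set for the measured orbit equivalence relation, rather than the paper's computation of Connes' invariant $\Gamma(M)$ via the Arveson spectrum of the modular group on corners $qMq$, combined with Haagerup's uniqueness theorem. These invariants are equivalent here, and your preliminary steps (non-atomicity of $\mu$, ergodicity from uniqueness in Lemma \ref{tuborg3}, hyperfiniteness from amenability/nuclearity) are sound.

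The gap is in the ratio-set computation itself, and you have located the difficulty in the wrong place. The Radon--Nikodym derivative of $T_{a,b}$ is \emph{exactly} constant on all of $[1^{b-1}0]$ --- that follows at once from (\ref{conformal}) --- so the step you flag as hard is trivial. What the definition of the ratio set (equivalently, of an essential value of the cocycle) actually demands is a universal quantifier over sets: for \emph{every} Borel set $A$ of positive measure one must find a partial transformation in the full groupoid carrying a positive-measure subset of $A$ \emph{into} $A$ with derivative close to the target value. Ergodicity alone does not supply this; a value attained by the cocycle on some fixed pair of cylinders need not be an essential value. To localize your maps inside an arbitrary $A$ one must (i) approximate $A$ by a finite union of cylinders $[u_i]$ that it nearly fills, and (ii) control the derivative of the localized maps $[u_i1^{b-1}0] \to [u_i1^{a-1}0]$. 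Point (ii) is where the specific potential (\ref{F}) bites: because $F$ depends on the position of the first $0$ to the right, a prefix $u_i$ ending in $m$ trailing $1$'s changes the derivative from $\exp\left(-\beta_0(s_a - s_b)\right)$ to $\exp\left(-\beta_0(s_{a+m} - s_{b+m})\right)$, and for a general measurable $A$ there is no single such localization. This is precisely the technical content of the paper's proof: Assertion \ref{assert1}, the Kaplansky/KMS approximation leading to (\ref{cruxest}), and the further replacement of each $u_i$ by words terminating in $0$. Until you supply an argument of this kind (a Lebesgue-density or martingale argument plus the end-in-$0$ normalization), the claim that the exhibited ratios lie in the ratio set --- and hence the type $III_1$ conclusion --- is not established.
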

\begin{proof} The assertion concerning the cases when $\beta > \beta_0$ follows from Lemma \ref{b31}. Let $\omega$ be the $\beta_0$-KMS
  state. It follows from
  Theorem \ref{Nesh} and Lemma \ref{sums} that there is a continuous
  Borel probability measure $\mu$ on $\{0,1\}^{\mathbb N}$ such that
$$
\omega(f) = \int_{\{0,1\}^{\mathbb N}} f(x,0,x) \ d\mu(x)
$$
for $f \in C_c\left(\mathcal G_{\sigma}\right)$. Let $\pi_{\omega}$ be
the GNS-representation of $\omega$ and set $M
  = \pi_{\omega}\left(C^*(\mathcal G_{\sigma})\right)''$. Then $M$ is
  a $\sigma$-finite injective factor since $C^*\left(\mathcal G_{\sigma}\right)$ is
 separable and nuclear, and $\omega$ is an extremal $\beta_0$-KMS state. By
  Haagerups result, \cite{Ha}, it suffices therefore now to show that
  $\Gamma(M) = \mathbb R$ where $\Gamma(M)$ is the invariant from
  Connes' thesis, \cite{C}. This will be achieved via an elaboration of
  the method used for the proof of Proposition 4.11 in \cite{Th2} and
  Theorem 3.2 in \cite{Th3}. Let $\tilde{\omega}$
  be the faithful normal state on $M$ extending $\omega$ in the sense that $\tilde{\omega} \circ
  \pi_{\omega} = \omega$. The modular automorphism group $\theta$ of $M$
  corresponding to $\tilde{\omega}$ is $\theta_t =
  \tilde{\sigma}^F_{\beta_0 t}$, where $\tilde{\sigma}^F$ is the $\sigma$-weakly continuous
  one-parameter group such that $\tilde{\sigma}^F_t
  \circ \pi_{\omega} = \pi_{\omega} \circ \sigma^F_t$ for all $t$,
  cf. Theorem 8.14.5 in \cite{Pe}. It follows from \cite{C} that
$$
\Gamma(M) = \bigcap_q \Sp(qMq),
$$
where we take the intersection over all non-zero central projections
$q$ in the fixed point algebra $N$ of $\theta$, and
$\Sp(qMq)$ is the Arveson spectrum of the restriction of
$\theta$ to $qMq$. Since $\Gamma(M)$ is a closed subgroup of $\mathbb R$ it
suffices to take a $K \in \mathbb N, \ K \geq 2$, and a non-zero central
projection $q$ in $N$ and show that
\begin{equation}\label{conclusion}
\frac{\beta_0}{K+1} \in \Sp(qMq).
\end{equation}

In the following we identify $\{0,1\}^{\mathbb N}$ with the unit space
of $\mathcal G_{\sigma}$ via the embedding $x \mapsto (x,0,x)\in
\mathcal G_{\sigma}$, and we set $\left\|a\right\|_{\omega} =
\sqrt{\tilde{\omega}(a^*a)}$ for $a \in M$. Furthermore, to simplify
notation we suppress $\pi_{\omega}$ from the notation and consider
$C_c\left(\mathcal G_{\sigma}\right)$ as a subalgebra of $M$. Finally,
we use $r$ and $s$ to denote the range and source map of $\mathcal
G_{\sigma}$, i.e. $r(x,k,y) = x$ and $s(x,k,y) =y$.

To approximate $q$ with
elements from $C\left(\{0,1\}^{\mathbb
      N}\right)$ we first prove
\begin{assert}\label{assert1}
For every $f \in C_c\left(\mathcal G_{\sigma}\right)$ and every
$\epsilon > 0$ there is a finite set $d_j,j =0,1,2, \cdots, m$, of
elements in $C\left(\{0,1\}^{\mathbb N}\right)$ such that
$\sum_{j=0}^m d_j^2 = 1$ and
$\sum_{j=0}^m d_jfd_j = f|_{\{0,1\}^{\mathbb N}} + h$ where
$\left\|h\right\|_{\omega} \leq \epsilon$. 
\end{assert}

Set $\Is \mathcal G_{\sigma} = \left\{ (x,k,x) \in \mathcal
  G_{\sigma}: \ x \in \{0,1\}^{\mathbb N} , \ k \neq 0
\right\}$. Write $f = f|_{\{0,1\}^{\mathbb N}}
+ f_1$ where $\supp f_1
\subseteq \mathcal G_{\sigma} \backslash \{0,1\}^{\mathbb
  N}$. Consider an open bisection in $\mathcal G_{\sigma}$ of the form
$$
\mathcal U =\left\{ (x,n-m,y): \ \sigma^n (x) = \sigma^m(y) , \ x\in W, \ y \in V
\right\}
$$
with $\sigma^n$ injective on $W$ and $\sigma^m$ injective on
$V$, and $n \neq m$. An element $\mathcal U \cap \Is \mathcal G_{\sigma}$ must have
the form $(z,n-m,z)$ where $\sigma^{\min \{n,m\}}(z)$ is
$|n-m|$-periodic for $\sigma$. There are only finitely many such
elements $z$ and we conclude therefore that $\supp f_1 \cap \Is \mathcal
G_{\sigma}$ is a finite set. Since $\mu$ is continuous,
$
\mu\left( r \left(\supp f_1 \cap \Is \mathcal
G_{\sigma}\right)\right) = 0$ and we can choose an open neighborhood
$U$ of $ r \left(\supp f_1 \cap \Is \mathcal
G_{\sigma}\right)$ such that 
\begin{equation}\label{esti}
\mu(U) \leq \frac{\epsilon^2}{4\|f\|^2+1}.
\end{equation}
Write $f_1 = f_2 +
f_3$ where $f_2 \in C_c\left( r^{-1}(U) \cap s^{-1}(U)\right)$ and
$\supp f_3 \subseteq \mathcal G_{\sigma} \backslash \left(\{0,1\}^{\mathbb
  N} \cup \Is \mathcal G_{\sigma}\right)$. For each $x \in
r\left(\supp f_3\right)$ there is an open neighborhood $U_x$ of $x$
such that 
$$
s \left( r^{-1}(U_x) \cap \supp f_3\right) \cap U_x =
\emptyset.
$$ 
It follows that there is a
finite open cover $U_i, i = 1,2, \cdots, m$, of $r\left(\supp
  f_3\right)$ such that $s \left( r^{-1}(U_i) \cap \supp f_3\right) \cap U_i =
\emptyset$ for each $i$. Let $k_j, j = 0,1,2, \cdots,m$, be a
partition of unity in $C\left(\{0,1\}^{\mathbb N}\right)$ such that
$\supp k_j \subseteq U_j, j =1,2,\cdots, m$, and $\supp k_0 \cap
r\left( \supp f_3\right) = \emptyset$. Set $d_j = \sqrt{k_j}$. Then $ \sum_{j=0}^m
d_j f_3d_j = 0$ and hence
$$
\sum_{j=0}^m d_j fd_j =  f|_{\{0,1\}^{\mathbb N}} + \sum_{j=0}^m d_j
f_2d_j .
$$
Set $h = \sum_{j=0}^m d_j
f_2d_j$. Then $\left\|h\right\| \leq 2 \|f\|$ and $\supp h \subseteq
r^{-1}(U) \cap s^{-1}(U)$. In particular, from the last fact it
follows that $h^*h|_{\{0,1\}^{\mathbb N}}$ is supported in $U$ and we
find therefore from (\ref{esti}) that 
$$
\left\| h\right\|^2_{\omega} = 
  \int_{\{0,1\}^{\mathbb N}} h^*h|_{\{0,1\}^{\mathbb N}} \ d\mu \leq \|h\|^2
  \mu (U) \leq \epsilon^2.
$$
This proves Assertion \ref{assert1}. To use it, note that an
application of Kaplansky's density theorem gives us a sequence
$\{g_n\}$ of self-adjoint elements in $C_c(\mathcal G_{\sigma})$ such
that $\|g_n\| \leq 1$ for all $n$ and $\lim_{n \to \infty}
\left\|q - g_n \right\|_{\omega} = 0$. Set $f_n = g_n|_{\{0,1\}^{\mathbb
    N}}$ and let $\epsilon >0$. Let $\{d_j\}_{j=0}^m$ be a partition
of unity  in
$C\left(\{0,1\}^{\mathbb N}\right)$ obtained by
applying Assertion \ref{assert1} with $f = g_n$. Using Kadisons
Schwarz-inequality for the second inequality and the KMS-property for
the second equality we find that
\begin{equation*}
\begin{split}
& \left\| q- f_n\right\|_{\omega} - \epsilon \ \leq \ \left\|
  q- \sum_{j=0}^m d_jg_nd_j\right\|_{\omega}  = \left\| \sum_{j=0}^m d_j (q -
    g_n)d_j \right\|_{\omega} \\\
& \leq
\ \sqrt{\tilde{\omega} \left( \sum_{j=0}^m d_j (q -
    g_n)^2 
    d_j\right)} = \sqrt{\tilde{\omega} \left( \sum_{j=0}^m  \left(q -
    g_n\right)^2 
    d_j^2\right)}  = \left\| q- g_n\right\|_{\omega}.
\end{split}
\end{equation*} 
It follows that $\lim_{n \to \infty}
\left\|q - f_n \right\|_{\omega} = \lim_{n \to \infty}
\left\|q - g_n \right\|_{\omega} = 0$. 
Since finite linear combinations of $\left\{1_{[u]} : \ u \in \bigcup_n
  \{0,1\}^n \right\}$ is a norm-dense $*$-subalgebra of
$C\left(\{0,1\}^{\mathbb N}\right)$ we may assume that each $g_n$ is
such a linear combination. Since $q$ is a projection a standard argument, as in
the proof of Lemma 12.2.3 in \cite{KR}, shows that for every $\epsilon
> 0$ there is a finite collection $u_1,u_2, \cdots, u_L \in  \bigcup_n
  \{0,1\}^n $ such that $[u_i] \cap [u_j] = \emptyset$ when $i \neq
  j$, and 
\begin{equation}\label{cruxest}
\left\| q - p\right\|_{\omega} < \epsilon,
\end{equation} 
where $\epsilon > 0$ is as small as we need, and $p = \sum_{i=1}^L 1_{[u_i]}$. We
choose $\epsilon > 0$ so small that
$$
  \exp\left(-\beta_0 \left(\sum_{j=1}^{K} \frac{1}{j}\right)\right)
\epsilon  +\exp \left(\frac{\beta_0}{K}\right) \epsilon + 2\epsilon
\ < \
\exp\left(-\beta_0 \left(\sum_{j=1}^{K+1} \frac{1}{j}\right)\right)
\tilde{\omega}(q).
$$
Fix an $i$ and consider the mutually disjoint sets $\left[u_i0\right],
\left[u_i10\right], \left[u_i110\right], \left[u_i1110\right], \
\cdots$. Since $[u_i] \backslash \bigcup_{j=0}^{\infty} \left[
  u_i1^j0\right]$ only contains one element, and since $\mu$ does not
have atoms, we conclude that 
$$
\lim_{m \to \infty} \left\| 1_{[u_i]} - \sum_{j=0}^m
 1_{ \left[u_i1^j0\right]} \right\|_{\omega} = 0.
$$
Exchanging $u_i$ with $u_i1^j0, j = 0,1,2, \cdots, n$, for some
sufficiently large $n$, and doing a similar thing with the other
$u_i$'s, we can arrange that each $u_i$ terminates with $0$. 
Set $l^+ = 1^{K-1}0$, $l^- = 1^K0$ and $\left|l^+\right| =K,
\left|l^-\right| = K+1$. For each $i$ we let $w^{\pm}_i \in C_c(\mathcal G_{\sigma})$ be the
characteristic function of the compact and open set
$$
\left\{ ( u_ix, - |l^{\pm}|, u_il^{\pm}x) : \ x \in \{0,1\}^{\mathbb N}
\right\}
$$
in $\mathcal G_{\sigma}$. Each $w^{\pm}_i$ is a partial isometry such that
\begin{enumerate}
\item[a)] $w^{\pm}_i{w^{\pm}_i}^* = 1_{[u_i]}$ ,
\item[b)] ${w^{\pm}_i}^* w^{\pm}_i = 1_{[u_il^{\pm}]} \leq
  1_{[u_i]}$, and
\item[c)] $\sigma^F_t\left(w^{+}_i\right) = \exp \left( -it
    \left(\sum_{j=1}^K \frac{1}{j}\right)\right)w^{+}_i$ while $\sigma^F_t\left(w^{-}_i\right) = \exp \left( -it
    \left(\sum_{j=1}^{K+1} \frac{1}{j}\right)\right)w^{-}_i$ 
  for all $t \in \mathbb R$.
\end{enumerate} 
By using that $u_i$ terminates with $0$, and that $l^- = 1^K0$, it
follows from the conformality condition (\ref{conformal}) and the
definition of $F$ that
\begin{equation}\label{d)}
\begin{split}
&\mu\left(\left[u_il^{-}\right]\right) = \exp \left( -
        \beta_0\left(\sum_{j=1}^{K+1} \frac{1}{j}\right)\right) \mu
      ([u_i]).
\end{split}
\end{equation} 
Set $w_{\pm} = \sum_{i=1}^L w^{\pm}_i$ and note that $w_{\pm}$ are partial isometries. It follows from b) that $w_+p = w_+$ and therefore from
(\ref{cruxest}) that
\begin{equation*}
\begin{split}
& \tilde{\omega} (q{w_+}^*w_-q{w_-}^*w_+q) \geq
\tilde{\omega}({w_+}^*w_-q{w_-}^*w_+) - 2\epsilon .
\end{split}
\end{equation*}
Since $\tilde{\omega}$ is a $\beta_0$-KMS state for $\tilde{\sigma}^F$, it follows from c) that
$$
\tilde{\omega}({w_+}^*w_-q{w_-}^*w_+) = e^{\frac{\beta_0}{K+1}}
\tilde{\omega}(q{w_-}^*w_+{w_+}^*w_-), 
$$
which thanks to a) is the same as
$e^{\frac{\beta_0}{K+1}}\tilde{\omega}(q{w_-}^*w_-)$. Using (\ref{cruxest}) again we find that
$$
\tilde{\omega}(q{w_-}^*w_-) \geq \tilde{\omega}(p{w_-}^*w_-) - \epsilon .
$$
It follows from b) that $ \tilde{\omega}(p{w_-}^*w_-) =
\tilde{\omega}({w_-}^*w_-)$ while b), a) and (\ref{d)}) imply that
$$
\tilde{\omega}({w_-}^*w_-) = \mu\left(\cup_i \left[u_i l^-\right]\right) =
\exp\left(-\beta_0 \left(\sum_{j=1}^{K+1} \frac{1}{j}\right)\right) \tilde{\omega}(p). 
$$
Since $\tilde{\omega}(p) \geq \tilde{\omega}(q) - \epsilon$ by (\ref{cruxest}) we can
put everything together and conclude that
\begin{equation*}
\begin{split}
&\tilde{\omega}(q{w_+}^*w_-q{w_-}^*w_+q) \geq \\
& \exp\left(-\beta_0 \left(\sum_{j=1}^{K} \frac{1}{j}\right)\right)
\tilde{\omega(q)} - \exp\left(-\beta_0 \left(\sum_{j=1}^{K} \frac{1}{j}\right)\right)
\epsilon  -\exp \left(\frac{\beta_0}{K+1}\right) \epsilon -2\epsilon .
\end{split}
\end{equation*}
It follows therefore from the choice of $\epsilon$ that
$\tilde{\omega}(q{w_+}^*w_-q{w_-}^*w_+q) > 0$. By c), 
$$
\tilde{\sigma}^F_t(q{w_+}^*w_-q) = e^{-i t\frac{1}{K+1}} q{w_+}^*w_-q
$$ 
and hence $\theta_{t} (q{w_+}^*w_-q) =  e^{-i t\frac{\beta_0}{K+1}} 
q{w_+}^*w_-q$ for all $t \in \mathbb R$.
Since $q{w_+}^*w_-q \neq 0$
we obtain the desired conclusion (\ref{conclusion}) from Lemme 2.3.6 in \cite{C}.

\end{proof}

\end{document}